\documentclass[11pt]{amsart}

\usepackage{times}
\usepackage{amssymb}
\usepackage{graphicx,xspace}
\usepackage{epsfig}
\usepackage{enumitem}
\usepackage{xcolor}
\usepackage{tikz}
\usepackage{gensymb}
\usepackage[T1]{fontenc}
\usepackage[utf8]{inputenc}
\usepackage{bm}

\usepackage{hyperref}

\theoremstyle{plain}

\newtheorem{lemma}{Lemma}[section]
\newtheorem{theorem}[lemma]{Theorem}

\newtheorem{proposition}[lemma]{Proposition}

\theoremstyle{remark}

\newtheorem{definition}[lemma]{Definition}

\newcommand{\C}{\mathbb{C}}

\newcommand{\QQ}{\mathbb{Q}}

\newcommand{\h}{\mathfrak{h}}

\newcommand{\Young}{\mathcal{Y}}

\newcommand{\bfla}{{\bf \lambda}}
\newcommand{\bfw}{{\bf w}}
\newcommand{\trace}{\mathrm{trace}}
\newcommand{\rank}{\mathrm{rank}}

\DeclareMathOperator{\Ch}{Ch}
\DeclareMathOperator{\Cl}{C\ell}

\DeclareMathOperator{\wt}{wt}


\author[V.~Féray]{Valentin Féray}
 \address[V.F.]{LaBRI, Universit\'e Bordeaux 1, 351 cours de la Lib\'eration, 33 400
 Talence, France}
 \email{feray@labri.fr}
 \thanks{VF supported by ANR project PSYCO (ANR-11-JS02-001).}

 \author[I.P.~Goulden]{I. P.~Goulden}
\address[I.G.]{
 Department of Combinatorics \& Optimization,
 University of Waterloo,
 Waterloo, Ontario,
 CANADA N2L 3G1}
 \email{ipgoulden@uwaterloo.ca}
 \thanks{IPG supported by a Discovery Grant from NSERC}

 \keywords{hook formula, tree enumeration, representation theory of
symmetric groups, finite difference operators, multivariate Lagrange inversion}

\subjclass{05A19, 05E10}

 \title{A multivariate hook formula for labelled trees}

 \begin{document}

 \maketitle

 \begin{abstract}
    Several hook summation formulae for binary trees have appeared recently
    in the literature.
    In this paper we present an analogous formula for unordered increasing trees of size
    $r$, which involves $r$ parameters.
    The right-hand side can be written nicely as a product of linear factors.
    We study two specializations of this new formula, including Cayley's enumeration of trees with respect to vertex degree.
    We give three proofs of the hook formula. One of these proofs arises somewhat indirectly, from
    representation theory of the symmetric groups, and in particular uses Kerov's character polynomials. The other proofs are more direct, and
    of independent interest.
 \end{abstract}

 \section{Introduction and the main result}
Hook formulae first appeared in the context of representation theory of the symmetric groups:
 Frame, Robinson and Thrall \cite[Theorem 1]{HookLengthFormula1954} proved 
that the dimension $\chi^{\lambda}( (1^n))$ of the representation associated to a Young diagram $\lambda$ with $n$ boxes,
(which is also the number of increasing labellings of the boxes of $\lambda$)
is given by the simple ratio
\[ \chi^{\lambda}( (1^n))  = \frac{n!}{\prod_{\Box \in \lambda} h(\Box)}, \]
where $h(\Box)$ is the size of the hook attached to the Box $\Box$.

It was subsequently pointed out by D. Knuth \cite[§5.1.4 Exer. 20]{KnuthArtProg3}
that the number $L(T)$ of increasing labellings of the vertices of a rooted tree $T$ can be expressed by using the same kind of formula.
In particular, 
\begin{equation}\label{EqHookOneTree}
    L(T) = \frac{|T|!}{\prod_{v \in T} h_T(v)},
\end{equation}
where $|T|$ is the number of vertices of $T$ and $h_T(v)$ is the size of
the hook $\h_T(v)$ attached to the vertex $v$ in $T$ (see definition below).

At this point, we fix some terminology and notation.
A {\em tree} is an acyclic connected graph.
{\em Rooted} means that we distinguish a vertex; then each edge
can be oriented towards the root and we call respectively {\em father}
and {\em son} the head and tail of the edge.
With this terminology, it is easy to guess what the {\em descendants}
of a vertex are:
they can be defined recursively as the sons and the descendants of the sons.
The hook attached to the vertex $v$ in the tree $T$, denoted by $\h_T(v)$, is the set consisting of $v$ and its descendants.

For another consequence of the rooted tree hook formula \eqref{EqHookOneTree},
recall that there is a well-known one-to-one correspondence between
increasing binary trees with $n$ vertices, and permutations of size $n$,
see {\em e.g.} \cite[p. 23-25]{StanleyEC1}.
Hence, the total number of increasing labellings of all binary trees of size
$n$ is equal to the number of permutations of size $n$, which yields the formula
\begin{equation}
    \sum_{ {\footnotesize \begin{tabular}{c}
        $T$ binary\\
        tree of size $n$
    \end{tabular}}}
    \prod_{v \in T} \frac{1}{h_T(v)}=1.
    \label{EqHookSum}
\end{equation}
Despite their simplicity, both formulae \eqref{EqHookOneTree} and
\eqref{EqHookSum} have been the subject of many research papers.
We mention briefly five directions that these papers have taken:
\begin{itemize}
    \item $q$-analogues of formula \eqref{EqHookOneTree} have been found
        where increasing labellings of a given tree are counted with respect to
        one (or more) statistics: see \cite{BjoernerWachsQHook} and
        \cite[Lemma 5.3]{ChapotonFlorentMarnitonsMoules};
    \item Formula \eqref{EqHookOneTree}
        (and the $q$-analogues mentioned above)
        has been extended to more general
        classes of posets than trees (or forests):
        $d$-complete posets \cite{ProctorDynkin,ProctorMinuscule},
        shrubs \cite[Proposition 3.6]{ChapotonArbustes},
        forests with duplications \cite[Theorem 1.4]{VicEtMoiHook};
    \item In summation formula \eqref{EqHookSum}, the factor $\frac{1}{h_T(v)}$ can
        be replaced by some more complicated function of $h_T(v)$ such that
        the sum over binary trees remains nice.
        An example is the following formula 
        \cite[equation (1.2)]{DuLiuGeneralizedHookSum}
\begin{equation}
    \!\!\!\!\!\!\!\!\!\!\!\! \sum_{ {\footnotesize \begin{tabular}{c}
        $T$ binary\\
        tree of size $n$
    \end{tabular}}}
    \prod_{v \in T} \bigg(x+\frac{1}{h_T(v)} \bigg)=
    \frac{1}{(n+1)!}
    \prod_{i=0}^{n-1}
    \big( (n+1+i)x+n+1-i \big). 
    \label{EqPostnikov}
\end{equation}
    The case $x=0$ of course corresponds to \eqref{EqHookSum},
    the case $x=1$ is due to A. Postnikov 
    \cite[Corollary 17.3]{PostnikovHook}   
    and the general case is due to R. Du
    and F. Liu, who proved a conjecture of A. Lascoux, see
    \cite{DuLiuGeneralizedHookSum} and the references therein.
    Subsequently, G. Han designed an algorithm to discover such equalities,
    finding a generalization of Du and Liu's result, as well as
    many other formulae \cite{HanHookLengthAutomatic};

\item Another direction consists in replacing in summation formula 
    \eqref{EqHookSum} (or in the generalized version \eqref{EqPostnikov})
    binary trees by other families of trees.
    Formulae of this kind for plane forests or $m$-ary trees
    have been given in several papers~\cite{DuLiuGeneralizedHookSum,
    YangGeneralizationHanHookFormula, SunZhangHookFormulaMaryTrees,
    ChenEtAlHookFormulasCombi};

\item Finally, formulae \eqref{EqHookOneTree} and \eqref{EqHookSum} admit a number of
higher level interpretations.
In \cite{HivertNovelliThibonHookEquaDiff}, it is explained how \eqref{EqHookSum}
(and some generalizations) arises from solving differential equations
and can be lifted to the level of combinatorial Hopf algebras.
Probabilistic interpretations of \eqref{EqHookSum} and generalizations
are presented by B. Sagan in \cite{SaganProbabilisticHookFormulas}.
In a different direction, interpretations of
\eqref{EqHookOneTree} and some refinements/generalizations 
have been given in convex geometry~ \cite[Section 6]{VicAdrienAlainEtMoi}
and commutative algebra~\cite{VicEtMoiHook}.
\end{itemize}

In this paper, we follow the third fourth directions above.
Indeed, we present a summation formula,
in which the simple ratio $\frac{1}{h_T(v)}$ is replaced
by a more complicated expression with several parameters.
Besides, we do not work with binary trees,
but instead with {\em unordered increasing rooted trees}:
\begin{itemize}
    \item {\em unordered} means that the sons of a given vertex are not ordered;
    \item {\em increasing} means that the vertices are labelled
        (each integer between $1$ and $r$ is used exactly once)
        and that the label of a son
        is always bigger than the label of its father
        (in particular, the root always gets label $1$).
\end{itemize}

An example of an unordered increasing tree is given in Figure \ref{FigExTree}.
Since the sons of a given vertex are not ordered, we have chosen the convention
of always drawing them in increasing order from left to right.

\begin{figure}[t]
\tikzstyle{vertex}=[circle,draw]
\begin{tikzpicture}
    \node (v1) at (0,0) [vertex] {\footnotesize $1$};
    \node (v2) at (-1,-1) [vertex] {\footnotesize $2$};
    \node (v3) at (-2,-2) [vertex] {\footnotesize $3$};
    \node (v4) at (1,-1) [vertex] {\footnotesize $4$};
    \node (v5) at (-1,-2) [vertex] {\footnotesize $5$};
    \node (v6) at (-1.5,-3) [vertex] {\footnotesize $6$};
    \node (v7) at (1,-2) [vertex] {\footnotesize $7$};
    \node (v8) at (-.5,-3) [vertex] {\footnotesize $8$};
    \node (v9) at (0,-2) [vertex] {\footnotesize $9$};
    \draw (v1) -- (v2);
    \draw (v1) -- (v4);
    \draw (v3) -- (v2);
    \draw (v5) -- (v2);
    \draw (v9) -- (v2);
    \draw (v5) -- (v6);
    \draw (v5) -- (v8);
    \draw (v4) -- (v7);
\end{tikzpicture}
\caption{An increasing unordered tree}
\label{FigExTree}
\end{figure}
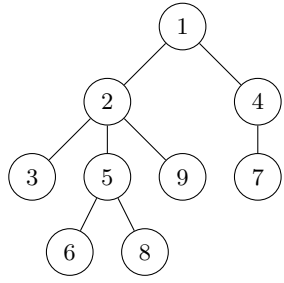

Our summation formula is given in the following theorem, which is the main result of this paper. We use the notation for falling factorials $(a)_m =a(a-1)\cdots (a-m+1)$ for positive integers $m$, with $(a)_0=1$, and $(a)_m=1/(a-m)_{-m}$ for negative integers $m$.

\begin{theorem}
Let $r\geq 1$ be an integer and $k_1, \cdots,k_r$ be formal variables,
    with $K=\sum_{i=1}^r k_i$.
    For an unordered increasing tree $T$ with $r$ vertices,
    define the weight to be
    \[ \wt(T) = \prod_{v=2}^r k_{f(v)}
            \bigg( \Big(\sum_{u \in \h_T(v)} k_u \Big) - h_T(v) + 1 \bigg),\]
    where $f(v)$ stands for the father of $v$ in $T$.
    Then
    \begin{equation}\label{hookform}
    \sum_T \wt(T)
    = k_1 \cdots k_r
    (K-1)_{r-2},
    \end{equation}
    \label{ThmHookFormula}
    where the sum runs over all unordered increasing trees on $r$ vertices.
\end{theorem}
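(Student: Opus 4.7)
My plan is a direct induction on $r$. The base cases $r = 1, 2$ are checked by hand.

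For the inductive step, decompose an unordered increasing tree $T$ on $[r]$ at its root. Write $T_1, \dots, T_m$ for the subtrees hanging off vertex $1$, with vertex sets $J_1, \dots, J_m$ forming a set partition of $\{2, \dots, r\}$ (each $T_s$ being itself an unordered increasing tree rooted at $\min J_s$), and set $K_s := \sum_{j \in J_s} k_j$. A direct computation gives
\[
\wt(T) \;=\; \prod_{s=1}^m k_1 \bigl(K_s - |J_s| + 1\bigr)\;\prod_{s=1}^m \wt(T_s),
\]
since the hook of $\min J_s$ inside $T$ is exactly $J_s$, while the hooks of all other vertices of $J_s$ are computed identically whether viewed in $T$ or in $T_s$. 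Summing over $T$, applying the inductive hypothesis to each $\sum_{T_s} \wt(T_s)$, and consolidating via $(K_s - |J_s| + 1)(K_s - 1)_{|J_s| - 2} = (K_s - 1)_{|J_s| - 1}$, the theorem reduces to the auxiliary set-partition identity
\begin{equation*}
\sum_{\pi \in \Pi(\{2, \dots, r\})} k_1^{m(\pi)} \prod_{B \in \pi} (K_B - 1)_{|B| - 1} \;=\; k_1\,(K - 1)_{r-2},
\end{equation*}
where $m(\pi)$ denotes the number of blocks of $\pi$ and $K_B := \sum_{j \in B} k_j$.

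The main obstacle is this auxiliary identity. Being purely algebraic and tree-free, it can be attacked independently by a second induction on the cardinality $n$ of the underlying set: condition on the block containing a distinguished element, and the resulting recursion reduces to a Chu--Vandermonde / Abel--Hurwitz style identity for falling factorials (easy to verify by comparing coefficients in a chosen variable). A more conceptual alternative, suggested by the paper's keyword on multivariate Lagrange inversion, is to recognize the left-hand side as $k_1$ times the coefficient of $\prod_{j=2}^{r} y_j$ in $\exp\bigl(k_1 F(\mathbf{y})\bigr)$ for $F(\mathbf{y}) = \sum_{\emptyset \ne B \subseteq \{2, \dots, r\}}(K_B - 1)_{|B| - 1} \prod_{j \in B} y_j$, and to evaluate this coefficient via Good's multivariate Lagrange formula applied to an appropriate tree-functional equation. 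In either approach, all the genuine tree-combinatorial content of the theorem has been absorbed into the root decomposition of the first step, leaving only an algebraic identity for falling factorials to verify.
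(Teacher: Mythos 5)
Your root decomposition and the resulting reduction are correct, and they coincide exactly with the first step of the paper's Lagrange-inversion proof: your identity
\[
\sum_{\pi}k_1^{m(\pi)}\prod_{B\in\pi}(K_B-1)_{|B|-1}=k_1(K-1)_{r-2}
\]
is precisely the set-partition form of the recurrence \eqref{CombTreeRec} combined with Theorem~\ref{MVLidentity} (your sum over unordered partitions corresponds to the paper's sum over ordered tuples $(X_1,\dots,X_j)$ with the factor $k_1^j/j!$, after dividing both sides by $k_2\cdots k_r$). So the combinatorial content of your argument is sound. The problem is that you have not actually proved this identity, and it is where all the difficulty of the theorem lives. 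Your first suggested finish --- a second induction conditioning on the block containing a distinguished element --- lands, after applying the inductive hypothesis to the partition of the complement of that block, exactly on the identity $P=Q$ of Section~\ref{SectProof1}: with $B_0=\{2\}\sqcup X_2$ and $X_1$ its complement, the required statement is
\[
\sum_{X_1\sqcup X_2=\{3,\dots,r\}}k_1\,(k_1+K_{X_1}-1)_{|X_1|-1}\,(k_2+K_{X_2}-1)_{|X_2|}=(K-1)_{r-2},
\]
which is \eqref{EqDefP} set equal to $(K-1)_{r-2}$. This is a genuine multivariate Abel--Hurwitz identity for falling factorials, not a Chu--Vandermonde convolution, and ``comparing coefficients in a chosen variable'' does not dispose of it: the summands are not monomials in any single $k_i$, and the coefficients of $k_1^j$ on the left for $0<j<r-2$ mix contributions from all $X_1$ with $|X_1|\ge j$ in a way that does not telescope. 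The paper needs a real argument here --- it characterizes $(K-1)_{r-2}$ by its constant term in $k_1$ together with the finite difference equation \eqref{EqFiniteDiffP}, and then verifies both properties for $P$.

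Your second suggested finish (Good/Lagrange inversion applied to $\exp(k_1F(\mathbf{y}))$) is essentially the paper's own proof of Theorem~\ref{MVLidentity}, where one computes $[t_1\cdots t_r]w_1$ in two ways for the system $w_i=t_i(1+w_1+\cdots+w_r)^{k_i}$; but you have only named this route, not carried it out --- the choice of the functional equation and the evaluation of $[\prod_{x\in X}t_x]\log(1+\sum_i w_i)=(K_X-1)_{|X|-1}$ are the substance of that proof. In short: the tree-theoretic part of your proposal is correct and matches the paper, but the ``only an algebraic identity left to verify'' framing understates the situation; that identity is the theorem's core, and as written your proposal leaves it unproved.
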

For example, the weight of the tree given in Figure \ref{FigExTree} is
\begin{multline*}
    k_1 (k_2+k_3+k_5+k_6+k_8+k_9-5) \cdot k_2 k_3 \cdot k_1 (k_4+k_7-1)
\\ \cdot k_2 (k_5 +k_6 +k_8-2) \cdot k_5 k_6
\cdot k_4 k_7 \cdot k_5 k_8 \cdot k_2 k_9.
\end{multline*}
Note that, if $v$ is a leaf, its contribution to the weight is $k_{f(v)} k_v$.
Since each vertex is either a leaf or the father of another vertex,
the quantity $\wt(T)$ is always divisible by $k_1 \cdots k_r$ 
(except for $r=1$).

We refer to~\eqref{hookform} as our \emph{hook formula}.
We point out the fact that the formula for trees of size $r$
involves $r$ independent parameters,
while formula \eqref{EqPostnikov} and all formulae in 
\cite{HanHookLengthAutomatic} involve a fixed number
of parameters.
As mentioned above, for $r>1$, the monomial $k_1 \cdots k_r$
divides all terms of the sum,
but the latter do not share any other factors.
Thus it is quite remarkable that the right-hand side, which is a polynomial
in $r$ parameters, can be written as a product of simple linear factors. (Note that in the case $r=1$, we have $(K-1)_{r-2}=k_1^{-1}$, which cancels the factor $k_1$.)

In Section \ref{SectSpe} we present two specializations of our result:
an analogue
of the aforementioned hook formula of Postnikov, and the multivariate enumeration of Cayley trees with respect to vertex degree.
In our opinion, this makes Theorem~\ref{ThmHookFormula} interesting in itself.

Another interesting feature of this new hook formula is the connection with representation theory of the symmetric group. This link is explained in Section~\ref{SectRepresentation}, where we give our first proof of Theorem~\ref{ThmHookFormula}. This proof uses Kerov's character polynomials, and does not seem related to the Frame-Robinson-Thrall formula. The proof is quite involved, and reasonably indirect, so we also give two inductive proofs of the hook formula that are more direct. The first of these direct proofs, given in Section~\ref{SectProof1}, uses elementary operators on polynomials. The second of these direct proofs is given in Section~\ref{SectProof2}, and uses Lagrange's Implicit Function Theorem in many variables.


\section{Two specializations of the hook formula}
\label{SectSpe}
\subsection{An analogue of Postnikov's formula}
Here we consider the specialization of all variables $k_1,\dots,k_r$
to the same value $k$.
Then the weight of an unordered increasing tree $T$ in Theorem~\ref{ThmHookFormula} becomes
\begin{align*}
    \wt'(T) = \wt(T) \bigg|_{k_i=k} &= k^{r-1} \prod_{v=2}^r \bigg( (k-1) h_T(v) +1 \bigg) \\
&= \frac{k^{r-1}}{(k-1)r+1} \prod_{v \in T} \bigg( (k-1) h_T(v) +1 \bigg) .
\end{align*}
Therefore, setting $x=k-1$, our hook formula becomes
\begin{equation}
    \sum_{ {\footnotesize \begin{tabular}{c}
        $T$ increasing\\
        unordered tree\\
        of size $r$
    \end{tabular}}}
    \prod_{v \in T} (x h_T(v) + 1 )= (x+1)
    \prod_{i=1}^{r-1} (x\cdot r +i)
    \label{EqHookAllParEqual}
\end{equation}
Using the fact (equation~\eqref{EqHookOneTree}) that there are 
$n!/(\prod_{v\in T} h_v(T))$
increasing labellings for each binary tree $T$,
equation \eqref{EqPostnikov} can be rewritten as 
\begin{equation}
    \!\!\!\!\!\!\!\!\!\!\!\!  \sum_{ {\footnotesize \begin{tabular}{c}
        $T$ increasing\\
        binary tree\\
        of size $n$
    \end{tabular}}}
    \prod_{v \in T} \left(x h_T(v)+1 \right)=
    \frac{1}{n+1}
    \prod_{i=0}^{n-1}
    \big( (n+1+i)x+n+1-i \big).
    \label{EqPostnikovRewritten}
\end{equation}
Thus the specialization with equal parameters of our formula is an analogue
of Postnikov's formula for another family of trees.
Unfortunately, a short computer exploration suggests that 
equation~\eqref{EqPostnikovRewritten} does not seem to have such a nice multivariate 
refinement as Theorem~\ref{ThmHookFormula}.

\subsection{Multivariate enumeration of Cayley trees}
By definition, a Cayley tree is a 
tree\footnote{Cayley trees are not embedded
in the plane and have no root,
they are only specified by an adjacency matrix.}
with distinguishable vertices.
As early as 1860 \cite{BorchardtCayley},
C.W. Bochardt proved that the number of trees
with vertex set $[r] = \{ 1, \cdots ,r \}$ is $r^{r-2}$.
As noticed by A. Cayley \cite{CayleyTrees}, his proof also leads to
the following multivariate enumeration formula
for what are now called Cayley trees:
\begin{equation}
    \label{EqCayleyRefined}
    \sum_{\footnotesize \begin{tabular}{c}
    $U$ Cayley tree\\
    with vertex set $[r]$
\end{tabular}}
\!\!\!\!\!\! k_1^{d_1(U)} \cdots k_r^{d_r(U)} =
k_1 \cdots k_r K^{r-2},
\end{equation}
where $d_i(U)$ denotes the degree of the vertex $i$ in a tree $U$.

We will show that the specialization $k_1,\dots,k_r \to \infty$, that is
the highest degree term in $k$ of our hook formula, corresponds to
\eqref{EqCayleyRefined}.
Hence our hook formula can be viewed as a non-homogeneous extension of
the multivariate enumeration of Cayley trees.

To do this, we define a mapping $\varphi$ from Cayley trees
with vertex set $V$ to increasing unordered trees with label set $V$,
where $V$ is a finite nonempty set of positive integers. Consider a Cayley tree $U$ with vertex set $V$.
The definition is inductive and produces an increasing unordered tree $T=\varphi (U)$ as follows:
\begin{itemize}
    \item Let $\ell=\min V$. If $|V|=1$, then $T$ has a single vertex, with label $\ell$. Otherwise, remove vertex $\ell$ and all incident edges from $U$, to obtain
        a forest whose connected components are Cayley trees
        $U_1, U_2,\dots$ ;
    \item Apply $\varphi$ inductively to $U_1, U_2,\dots$;
    \item Take the disjoint union of all $T_i=\varphi(U_i)$, and add a vertex (which is the root vertex of $T$) with 
        label $\ell$, joined to the root vertices of all $T_i$.
\end{itemize}
The mapping $\varphi$ is clearly not injective in general.
If $T$ is an increasing unordered tree with label set $V$, then
the elements $U$ of the preimage $\varphi^{-1}(T)$
can be obtained inductively as follows:
\begin{itemize}                              
    \item Let $\ell=\min V$. If $|V|=1$, then $U$ has the single vertex $\ell$. Otherwise, remove the root vertex of $T$ (which has label $\ell$), to obtain
        the increasing unordered trees $T_1, T_2,\dots$;
    \item Select an element $U_i$ in each set $\varphi^{-1}(T_i)$;
    \item Take the disjoint union of all $U_i$, choose one vertex in each $U_i$
        and add a vertex with label $\ell$ joined to all selected vertices.
\end{itemize}
For a given increasing unordered tree $T$, denote
\[ \wt''(T) = \sum_{U: \varphi(U)=T} \; \prod_{v \in V} k_v^{d_v(U)}.\]
The above description of $\varphi^{-1}(T)$ implies that
\[ \wt''(T) = \prod_{T_i} \wt''(T_i) 
\bigg( k_{\ell}  \sum_{v \in T_i} k_v \bigg),\]
where $\ell$ is the label of the root and 
the product is taken over the trees $T_1, T_2,\dots$ obtained
by removing the root of $T$.
An immediate induction yields
\[ \wt''(T) = \prod_{v=2}^r k_{f(v)}
            \bigg( \sum_{u \in \h_T(v)} k_u \bigg),\]
with the same notation as in Theorem~\ref{ThmHookFormula}.
We observe that $\wt''(T)$ is exactly the highest degree term in 
$\wt(T)$ and therefore,
as an immediate corollary of Theorem~\ref{ThmHookFormula}, we get
\[\sum_{ {\footnotesize \begin{tabular}{c}
        $T$ increasing\\
        unordered tree\\
        of size $r$
    \end{tabular}}} \!\!\!\!\!\! \wt''(T)
    = k_1 \cdots k_r K^{r-2}, \] 
which is the multivariate enumeration formula~\eqref{EqCayleyRefined} for Cayley trees.

\section{Kerov character polynomials}
\label{SectRepresentation}

In this section, we explain how Theorem~\ref{ThmHookFormula} arises
from computations in representation theory of the symmetric group.
In fact, the two sides of our hook formula correspond to the same coefficient
of the so called {\em Kerov character polynomials},
computed in two different ways.

In paragraph \ref{SubsectDefKerov}, we explain Kerov character polynomials
and which coefficient we want to compute.
Then, in paragraphs~\ref{SubsectKerovComb}, \ref{SubsectKerovMacdo}
and \ref{SubsectKerovFrob}, we give different ways to compute this coefficient,
which lead to our hook formula.
The first two approaches lead to the same result, but we have chosen to
present both to be more comprehensive on the subject.

\subsection{Definitions}\label{SubsectDefKerov}

Let us consider, for each $n$, the family of symmetric groups $S_n$.
It is well-known (see, {\em e.g.}, \cite[Chapter 2]{Sagan}) that
both conjugacy classes and irreducible representations of $S_n$ can
be indexed canonically by partitions of $n$,
so the character table of $S_n$ is a collection of numbers
$\chi^\lambda(\mu)$, where $\lambda$ and $\mu$ run over partitions of $n$
and are, respectively, the indices of the irreducible representation
and the conjugacy class.

Following S. Kerov and G. Olshanski \cite{KerovOlshanski1994},
for any partition $\mu$ of size $k$,
we shall consider the function $\Ch_\mu$
on the set $\Young$ of all Young diagrams (or equivalently
of all partitions of all sizes) defined by:
\[\Ch_\mu(\lambda) = \begin{cases}
    0  &\text{if } n< k;\\
    n(n-1)\ldots (n-k+1) \frac{\chi^\lambda\big(\mu \cup (1^{n-k}) \big)}
    {\chi^\lambda(Id_n)} &\text{otherwise,}
\end{cases}\]
where $n$ is the size of $\lambda$.

We also consider another family of functions on Young diagrams:
the free cumulants $(R_k)_{k \geq 2}$ of the transition measure
(for their definition we refer to \cite[Section 1]{BianeAsymptoticsCharacters}).
It has been shown by S. Kerov \cite[Theorem 1]{Biane2003}
(the reference given deals only with the case of a one-part
partition $\mu$, but the proof can be readily extended to the general case)
that there
exist polynomials $K_\mu$ such that, as functions on all Young diagrams,
\begin{equation}\label{EqDefKerovPoly}
    \Ch_\mu=K_\mu(R_2,R_3,\dots). 
\end{equation}
These polynomials are called Kerov character polynomials.
Their coefficients have been the subject of many research articles
in the last few years, see \cite{DolegaFeraySniady2008} and references therein.
Here we focus on the coefficient of a single $R_j$ ({\em linear} coefficient) for
the maximal value of $j$, that is
\[j=|\mu|-\ell(\mu)+2.\]
This coefficient has a very compact expression that we prove in the next paragraph (we use throughout the notation $[A]B$ to denote the \emph{coefficient} of $A$ in the expansion of $B$).
\begin{proposition}
    \label{PropTopDeg2OnePart}
Let $\mu$ be a partition and $j=|\mu|-\ell(\mu)+2$.
Then
\[ [R_j] K_\mu = (-1)^{\ell(\mu)-1} \left( \prod_{i=1}^{\ell(\mu)} \mu_i \right)
\frac{(|\mu|-1)!}{(|\mu|-\ell(\mu)+1)!}.\]
\end{proposition}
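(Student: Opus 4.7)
The plan is to combine Biane's weighted-grading theorem for Kerov polynomials with a direct inductive identification of the extremal linear coefficient. Recall Biane's result: when one assigns $\deg R_k = k$, the polynomial $K_\mu$ has top weighted degree $|\mu|+\ell(\mu)$, its unique monomial of maximal weight being $\prod_{i=1}^{\ell(\mu)}R_{\mu_i+1}$. A monomial consisting of a single variable $R_j$ has weighted degree exactly $j$, while every additional factor in a monomial adds at least $2$ to the weight. Therefore the maximal $j$ for which $[R_j]K_\mu$ can be nonzero is
\[ j \;=\; |\mu|+\ell(\mu) - 2(\ell(\mu)-1) \;=\; |\mu|-\ell(\mu)+2,\]
precisely the index in the statement; the coefficient in question is the ``top'' linear coefficient of $K_\mu$.

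To compute its value, I would induct on $\ell(\mu)$. The base case $\ell(\mu)=1$ is Biane's classical identity $[R_{k+1}]K_{(k)} = 1$, which matches the formula since $(-1)^{0}\cdot k\cdot(k-1)!/k! = 1$. For the inductive step I would work in the centre of $\Z[\Sym{n}]$ and exploit the cycle-joining product formula
\[ \Ch_{\nu}\,\Ch_{(k)} \;=\; \Ch_{\nu\cup(k)} \;+\; k\sum_{i} \nu_i\,\Ch_{(\nu_i+k,\,\nu_{\neq i})} \;+\; \textup{(lower weight)}, \]
which follows from the classical product of disjoint-cycle class sums. Solving recursively to express $\Ch_\mu$ in terms of $\prod_i \Ch_{(\mu_i)}$ and iteratively ``joined'' partitions, and then passing to free cumulants, the only contribution to the extremal linear coefficient comes from the maximal chain of $\ell(\mu)-1$ successive joinings that collapses all cycles of $\mu$ into a single cycle of size $|\mu|$. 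Each joining step chooses a marked point in a cycle of size $\mu_i$, contributing a factor $\mu_i$ and accounting for the $\prod_i \mu_i$ in the answer. Summing over the orderings of joinings, weighted by the sizes of the running cycles $|\mu|-1,\,|\mu|-2,\dots,\,|\mu|-\ell(\mu)+2$, produces the falling factorial $(|\mu|-1)_{\ell(\mu)-2} = (|\mu|-1)!/(|\mu|-\ell(\mu)+1)!$. The sign $(-1)^{\ell(\mu)-1}$ records the $\ell(\mu)-1$ successive subtractions carried out in the iteration.

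The main obstacle is to justify that the ``lower-weight'' corrections in the cycle-joining formula cannot produce a single-$R_j$ monomial at the extremal index $j = |\mu|-\ell(\mu)+2$. This reduces to a weight-degree check: those corrections have weighted degree strictly smaller than the minimum required for a linear monomial at this extremal $j$, so they are unable to interfere. The remaining work is essentially combinatorial bookkeeping to collect the product $\prod_i \mu_i$ and the falling factorial, but some care is needed in the passage from class sums $\Ch$ to free cumulants $R$, because one must check that this change of basis (which only mixes terms within a fixed weighted degree, up to strictly lower-weight corrections) preserves the top linear coefficient exactly.
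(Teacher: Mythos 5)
Your strategy---induction on $\ell(\mu)$ via products in the class algebra---is genuinely different from the paper, which instead invokes the Biane/Rattan--\'Sniady combinatorial interpretation of $(-1)^{\ell(\mu)-1}[R_j]K_\mu$ as a count of factorizations $\sigma_1\sigma_2=\sigma_\mu$ and then applies the B\'edard--Goupil enumeration. Your route could in principle work, but as written it breaks exactly at the step you flag as ``the main obstacle'', and the weight-degree check you hope for does not go through. The cycle-joining formula is incomplete at top weight: besides the disjoint term $\Ch_{\nu\cup(k)}$ and the single joinings (whose joined part should be $\nu_i+k-1$, not $\nu_i+k$), the product $\Ch_{\nu}\,\Ch_{(k)}$ contains terms in which the $k$-cycle meets \emph{two or more} cycles of $\nu$, one point each, and merges them all into a single cycle. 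Each such merge loses one unit of size per shared point but also loses one cycle, so $|\lambda|-\ell(\lambda)$---and hence the extremal linear index $|\lambda|-\ell(\lambda)+2=j$---is preserved. These terms are \emph{not} of lower weight and they do contribute to $[R_j]$. Concretely,
\[
\Ch_{(2,2)}\,\Ch_{(2)}\;=\;\Ch_{(2,2,2)}\;+\;8\,\Ch_{(3,2)}\;+\;8\,\Ch_{(4)}\;+\;\text{lower-weight terms},
\]
where the term $8\,\Ch_{(4)}$ arises from transpositions straddling the two $2$-cycles, e.g.\ $(1\,2)(3\,4)\cdot(1\,3)=(1\,4\,3\,2)$. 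Extracting $[R_5]$ (the product on the left contributes $0$, being a product of two constant-free polynomials) and using $[R_5]K_{(3,2)}=-6$ and $[R_5]K_{(4)}=1$ gives $[R_5]K_{(2,2,2)}=48-8=40$, in agreement with the proposition; your recursion, which discards the $\Ch_{(4)}$ term as ``lower weight'', yields $48$.

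A consequence is that the final bookkeeping cannot be as you describe: the chain of $\ell(\mu)-1$ successive \emph{pairwise} joinings, summed over orderings with factors equal to products of current cycle lengths, does not produce $\bigl(\prod_i\mu_i\bigr)(|\mu|-1)\cdots(|\mu|-\ell(\mu)+2)$ (for $\mu=(2,2,2)$ it gives $48$, not $8\cdot 5=40$). To repair the argument you would need the full top-weight structure constants, i.e.\ the number of ways a $k$-cycle can transitively merge any chosen subset of the cycles of $\nu$; determining these is essentially equivalent to counting minimal factorizations of a long cycle, which is the B\'edard--Goupil/Goulden--Jackson input that the paper uses directly. The parts of your proposal that do stand up are the base case $[R_{k+1}]K_{(k)}=1$, the identification of $j=|\mu|-\ell(\mu)+2$ as the extremal linear index (though this needs the gradation $\deg_2(R_k)=k-2$ bounding $\deg_2 K_\mu$ by $|\mu|-\ell(\mu)$, not just the grading $\deg R_k=k$), and the observation that products of two or more constant-free $\Ch$'s contribute nothing to any linear coefficient.
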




\subsection{Combinatorial interpretation of Kerov polynomials}
\label{SubsectKerovComb}

Linear coefficients in Kerov polynomials have a quite simple combinatorial
interpretation, established by P. Biane~\cite[Theorem 5.1]{Biane2003} for one-part partitions $\mu$, and by A. Rattan and P. \'Sniady~\cite[Theorem 19]{RattanSniady2008} for arbitrary partitions $\mu$:

$(-1)^{\ell(\mu)-1} [R_j] K_\mu$ is the number of pairs $(\sigma_1,\sigma_2)$ such that
\begin{itemize}
    \item $\sigma_1$ and $\sigma_2$ are permutations in $S_{|\mu|}$ with
        \begin{equation}\label{EqFacto}
            \sigma_1 \sigma_2 = \sigma_\mu,
        \end{equation}
        where $\sigma_\mu=(1 \dots \mu_1) (\mu_1 +1 \ \dots \mu_2) \dots$;
    \item $\sigma_2$ is a long cycle;
    \item $\sigma_1$ has $j - 1$ cycles.
\end{itemize}

Note that the absolute lengths\footnote{The absolute length of a permutation
    is the minimal number of factors needed to write it as a product of transpositions.
    It should note be confused with its Coxeter length.}
of $\sigma_1$ and $\sigma_\mu$ are $|\mu|-(j-1)=\ell(\mu)-1$ and $|\mu|-\ell(\mu)$.
These two numbers sum up to $|\mu|-1$.
This allows to use a theorem of F.~Bédard and A.~Goupil,
who counted the number of factorizations~\eqref{EqFacto}
where $\sigma_1$ has a given cycle-type $\lambda$ (here, $|\lambda|=|\mu|$
and $\ell(\lambda)=j-1$).
They obtained the following number \cite[Theorem 3.1]{BedardGoupilFactorizations}
(see also \cite[Theorem 2.2]{GouldenJackson1992}):
\[ \frac{(\ell(\mu)-1)! (j-2)! \prod_i \mu_i}{m_1(\lambda)! m_2(\lambda)! \cdots},\]
where $m_i(\lambda)$ is the number of parts of $\lambda$ equal to $i$, $i\geq 1$. To obtain $[R_j] K_\mu$, we have to sum over all possible cycle-types $\lambda$:
\[ (-1)^{\ell(\mu)-1} [R_j] K_\mu = \frac{(\ell(\mu)-1)!}{j-1} \prod_i \mu_i
    \!\!\!\!\!\! \sum_{\substack{\lambda \vdash |\mu|,\\\ell(\lambda)=|\mu|-\ell(\mu)+1}}
    \!\!\!\!\!\! \frac{(j-1)!}{m_1(\lambda)! m_2(\lambda)! \cdots}
    \]
The term indexed by $\lambda$ in the sum counts the number of sequences
$i_1,\dots,i_{j-1}$ that are permutations of $\lambda$.
Hence the sum is the number of sequences $i_1,\dots,i_{j-1}$ of positive integers
of sum $|\mu|$, that is $\binom{|\mu|-1}{j-2}$.
It is then straightforward to see that the expression above simplifies to the
one in Proposition~\ref{PropTopDeg2OnePart}.

\subsection{Macdonald symmetric functions}\label{SubsectKerovMacdo}

In this paragraph, we present another approach to Proposition~\ref{PropTopDeg2OnePart},
which relies on a basis of the symmetric function ring introduced
by I.G. Macdonald.

Consider the center $Z(\C[S_n])$  of the symmetric group algebra of size $n$.
A basis is given by the conjugacy class sums, that is
\[\Cl_\lambda = \sum_{\text{cycle-type}(\sigma)=\lambda} \sigma.\]
Since $Z(\C[S_n])$ is an algebra, there exist constants $c^{\lambda}_{\mu,\nu}$ such that,
for any two partitions $\mu$ and $\nu$ of size $n$,
\[\Cl_\mu \Cl_\nu = \sum_{\lambda \vdash n} c^{\lambda}_{\mu,\nu} \Cl_\lambda.\]
These constants are called {\em structure constants} or {\em connection coefficients}
of $Z(\C[S_n])$ and have been widely studied in the literature.

Macdonald~\cite[Exercises I.7.24, I.7.25]{McDo} gave an explicit construction of
a basis $u_\lambda$ of the symmetric function ring,
which can be characterized as follows:
\begin{itemize}
    \item $u_\lambda$ is homogeneous of degree $|\lambda|$;
    \item if $\lambda$ has only one part, then $u_\lambda$ is given by
        \[ u_{(n)} = -p_n, \]
        where $p_n$ is the $n$-th power sum;
    \item for a partition $\lambda$,
        denote $\bar{\lambda}$ the partition obtained from $\lambda$ by adding one
        to every part.
         Then, for any partitions $\mu$, $\nu$ and $n \geq |\bar{\mu}| +  |\bar{\nu}|$,
        \begin{equation}\label{EqUStruct}
            u_\mu u_\nu = \sum_{\lambda \vdash |\mu|+|\nu|} \
            c_{\bar{\mu} 1^{n-|\bar{\mu}|},\bar{\nu} 1^{n-|\bar{\nu}|}}^{
            \bar{\lambda} 1^{n-|\bar{\lambda}|}} u_\lambda
    \end{equation}
        where $c$ is the structure constant of the center of the symmetric group algebra
        defined above.
\end{itemize}
This construction can be found
in paper \cite{GouldenJacksonMacdonaldSym}
(see in particular Theorem 3.2 and Proposition 4.1,
which corresponds to the properties above).

Note that it is well-known~\cite[Lemma 3.9]{FaharatHigman1959} that the coefficients in the
right-hand side of \eqref{EqUStruct}
do not depend on $n$ (because $|\lambda|=|\mu|+|\nu|$).

We will see that Kerov polynomials contain in some sense Macdonald symmetric functions.
To do this, consider, as in \cite{DolegaFerayKerovJack} the gradation $\deg_2$
on the algebra $\Lambda$ generated by $R_k$ (for $k \ge 2$) defined
\[\deg_2(R_k)=k-2.\]
One can show that free cumulants are algebraically independent
so the definition makes sense.
Then, one has the following properties:
\begin{itemize}
    \item The top component of $K_k$ is $R_{k+1}$.
        Indeed consider a monomial $\prod_{i=1}^t R_{j_i}$ appearing to the top
        component of $K_k$ for $\deg_2$, \emph{i.e.} such that
        \[ \sum_{i=1}^t (j_i-2) = k-1. \]
        Then we must also have $\sum j_i \leq k+1$ \cite[Section 6]{Biane2003}.
        These two equations imply $t \leq 1$, which means that
        only $R_{k+1}$ appears in the top component of $K_k$
        (and its coefficient is known to be $1$);
    \item Let $\mu$ and $\nu$ be two partitions. Then
        one has
        \begin{multline*}
            \frac{K_{\bar{\mu}}}{z_{\bar{\mu}}} \cdot
        \frac{K_{\bar{\nu}}}{z_{\bar{\nu}}} =
        \sum_{\lambda \vdash |\mu|+|\nu|}
         c_{\bar{\mu} 1^{n-|\bar{\mu}|},\bar{\nu} 1^{n-|\bar{\nu}|}}^{
                    \bar{\lambda} 1^{n-|\bar{\lambda}|}} 
    \frac{K_{\bar{\lambda}}}{z_{\bar{\lambda}}}
    + \text{smaller degree terms for }\deg_2,
\end{multline*}
where $z_\pi$ is the classical constant $\prod_i i^{m_i} m_i!$
if $\pi$ is written as $1^{m_1} 2^{m_2} \cdots$
in exponential notation \cite[Chapter 1]{McDo}.
This second property can be deduced from \cite[Proposition 4.5]{IvanovOlshanski2002}:
we skip details here.
    \end{itemize}

Consider the algebra isomorphism between the subalgebra $\QQ[R_3,R_4,\dots]$
of $\Lambda$ and the symmetric function ring sending $R_{j+2}$ to $-(j+1) p_j$.
Then the top component of
 $\frac{K_{\bar{\lambda}}}{z_{\bar{\lambda}}} $
is sent to $u_\lambda$ because of the two properties above.

Hence, this top component can be computed using results on $u_\lambda$,
in particular \cite[Lemmas 7.1 and 7.2]{GouldenJacksonMacdonaldSym}.
If $j -2 = |\bar{\nu}|-\ell(\bar{\nu})=|\nu|$, then
\begin{align*}
    [R_j] K_{\bar{\nu}} =  \frac{-z_{\bar{\nu}}}{j-1} [p_{j-2}] u_\nu
&= \frac{-z_{\bar{\nu}}}{(j-1)(j-2)} [h_\nu] [s^{j-2}]
\frac{1}{\left(\sum_{m \geq 0} h_m s^m\right)^{j-2}} \\
&= \frac{-z_{\bar{\nu}}}{(j-1)(j-2)} \binom{-(j-2)}{m_1(\nu),m_2(\nu),\dots} \\
&= \frac{-z_{\bar{\nu}}}{(j-1)(j-2)} (-1)^{\ell(\nu)} \binom{j-2+ \ell(\nu)-1}{m_1(\nu),m_2(\nu),\dots}
\end{align*}
Simplifying the expression above and setting $\mu=\bar{\nu}$, we obtain
Proposition~\ref{PropTopDeg2OnePart}.

\subsection{Using the generalized Frobenius formula}\label{SubsectKerovFrob}
The most efficient way to compute the polynomials $K_\mu$
with a computer is to use the generalized Frobenius
formula \cite[Theorem 5]{RattanSniady2008}.
To state it, we need the notion of boolean cumulants $B_k$ (for $k \ge 2$)
of the transition measure.
They are functions on the set of all Young diagrams and they form
another algebraic basis of $\Lambda$ such that   
\[B_k=R_k + \text{non-linear terms}.\]
This implies that $[B_k]\Ch_\mu = [R_k]\Ch_\mu$, which is by definition $[R_k]K_\mu$
(see equation~\eqref{EqDefKerovPoly}).
Lastly, we denote by $H(z)$ the generating function of boolean cumulants
(which has coefficients in the ring $\Lambda$):
\[H(z) = z - B_2 z^{-1} - B_3 z^{-2} - \cdots.\]
The following result of A. Rattan and P. \'Sniady expresses
the normalized character
values $\Ch_\mu$ in terms of boolean cumulants:

\begin{theorem}[\cite{RattanSniady2008}]
    For any integers $\mu_1 \ge \cdots \ge \mu_r \geq 1$,
    \begin{multline} \label{EqGenFrob}
        (-1)^r \mu_1 \cdots \mu_r \Ch_{\mu_1,\dots,\mu_r} \\
        =[z_1^{-1}] \cdots [z_r^{-1}] \bigg[ \left(
        \prod_{1 \leq u \leq r} H(z_u) H(z_u-1) \cdots H(z_u-\mu_u+1) \right)\\
        \times
        \prod_{1 \leq s < t \leq r} \frac{(z_s-z_t)(z_s-z_t+\mu_t-\mu_s)}{(z_s-z_t-\mu_s)(z_s-z_t+\mu_t)}.
        \bigg]
    \end{multline}
    The right-hand side of \eqref{EqGenFrob} should be understood as follows:
    we expand the expression appearing there as a power series in decreasing powers
    of $z_r$ with coefficients being $\Lambda$-valued functions
    of $z_1 ,\dots , z_{r-1}$ and select the appropriate coefficient.
    We repeat this procedure with respect to $z_{r-1} , z_{r-2} , \dots , z_1$.
    \label{ThmGenFrob}
\end{theorem}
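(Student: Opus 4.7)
The plan is to prove Theorem~\ref{ThmGenFrob} by bootstrapping from the single-cycle case and identifying the rational factor as the precise multi-variable correction. The base case $r=1$ is the single-cycle formula, essentially due to Biane, which reads $-\mu_1\Ch_{\mu_1}=[z^{-1}]H(z)H(z-1)\cdots H(z-\mu_1+1)$. One derives this by identifying $H_\lambda(z)$ with the reciprocal Cauchy transform of the transition measure of $\lambda$ and by reading off a Stanley-type content sum: each factor $H(z-i)$ contributes, after residue extraction, a combination of contents of $\lambda$ that matches the known content formula for $\Ch_{\mu_1}(\lambda)$. One convenient realisation is via the action of the Jucys--Murphy elements on the irreducible representation indexed by $\lambda$, whose joint spectrum is indexed by tuples of contents.

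For $r \geq 2$, I would compare both sides as polynomial functions on $\Young$ (the Kerov--Olshanski algebra of polynomial functions on Young diagrams). The product $\prod_u H(z_u) H(z_u-1)\cdots H(z_u-\mu_u+1)$, with iterated residues but \emph{without} the rational factor, gives (up to sign $(-1)^r \mu_1\cdots\mu_r$) the product $\prod_u \Ch_{\mu_u}$, which in the Kerov--Olshanski algebra decomposes as $\Ch_\mu$ plus lower-order corrections corresponding to all ways of ``merging'' parts of $\mu$ into longer cycles (the structure constants of this algebra are, up to normalisation, the connection coefficients of the centres $Z(\C[S_n])$). The rational factor $\prod_{s<t} \frac{(z_s-z_t)(z_s-z_t+\mu_t-\mu_s)}{(z_s-z_t-\mu_s)(z_s-z_t+\mu_t)}$ is designed to cancel exactly these merged-cycle contributions: its zeros at $z_s=z_t$ and $z_s-z_t=\mu_s-\mu_t$ suppress the spurious residues coming from ``diagonal'' configurations, while its poles at $z_s-z_t \in \{\mu_s,-\mu_t\}$ live precisely where the merged-cycle terms would contribute, turning the naive product $\prod_u \Ch_{\mu_u}$ into the single normalised character $\Ch_\mu$.

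To execute the verification, I would expand each $H(z_u)$ as a formal Laurent series in $z_u^{-1}$, multiply out with the rational correction (expanded in decreasing powers of $z_r$, then $z_{r-1}$, and so on, as prescribed in the statement), extract the coefficient of $z_1^{-1}\cdots z_r^{-1}$, and match term-by-term with a combinatorial expansion of $\Ch_\mu(\lambda)$ via Stanley's content formula or via Jucys--Murphy moments. The main obstacle is the combinatorial identification of the rational factor's role. A clean route is to use the partial-permutation algebra of Ivanov--Kerov, in which $\Ch_\mu$ and products such as $\Ch_{\mu_1}\Ch_{\mu_2}$ have transparent descriptions and the merging corrections become visible, so that the multivariate residue can be recognised as the correct combinatorial generating function. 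A complementary strategy, useful for independent verification, is to check the identity on a family of ``test'' shapes for $\lambda$ (for instance rectangular Young diagrams, where the free and boolean cumulants collapse to explicit expressions) and then invoke algebraic independence of the $R_k$ (equivalently of the $B_k$) to upgrade the specialised identities to an identity of polynomial functions in $\Lambda$.
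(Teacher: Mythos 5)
First, note that the paper does not prove this statement at all: it is quoted verbatim from Rattan and \'Sniady \cite{RattanSniady2008} (their Theorem~5) and used as a black box in Section~\ref{SubsectKerovFrob}. So there is no internal proof to compare yours against; what can be assessed is whether your proposal would stand on its own, and as written it does not. The outline is a reasonable description of \emph{why the theorem should be true} --- the single-cycle case gives the base, the iterated residue of $\prod_u H(z_u)\cdots H(z_u-\mu_u+1)$ alone factorizes into $\prod_u(-\mu_u\Ch_{\mu_u})$, and in the Kerov--Olshanski (or Ivanov--Kerov) algebra $\prod_u\Ch_{\mu_u}$ equals $\Ch_\mu$ plus merged-cycle corrections --- but the entire content of the theorem is the claim that the specific rational factor $\prod_{s<t}\frac{(z_s-z_t)(z_s-z_t+\mu_t-\mu_s)}{(z_s-z_t-\mu_s)(z_s-z_t+\mu_t)}$, expanded in the prescribed iterated order, contributes \emph{exactly} the coefficients needed to cancel those corrections. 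You state this as a design intention (``its zeros\ldots suppress the spurious residues,'' ``its poles\ldots live precisely where the merged-cycle terms would contribute'') and then say the main obstacle is ``the combinatorial identification of the rational factor's role.'' That identification is the theorem; locating zeros and poles is not a computation of residue contributions, and no mechanism is given for matching the coefficients of the pole expansion against the structure constants of the algebra. The step that would carry the proof is therefore absent, not merely compressed.

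Your fallback verification strategy also fails as stated. Evaluating both sides on rectangular Young diagrams gives a specialization $\Lambda\to\QQ[p,q]$ in only two parameters; since the $R_k$ (equivalently the $B_k$) are infinitely many algebraically independent generators, this map has a large kernel, and agreement on rectangles cannot be upgraded to an identity in $\Lambda$ by ``invoking algebraic independence.'' One would need a separating family such as Stanley's multirectangular shapes $\mathbf{p}\times\mathbf{q}$ with arbitrarily many parameters --- indeed this is close to how Rattan and \'Sniady actually argue, via Stanley's character formula for such shapes --- but that is a substantively different and harder argument than the one you sketch. If you intend to use this theorem, the honest course (and the one the paper takes) is to cite \cite{RattanSniady2008}; if you intend to prove it, the residue-versus-structure-constants matching must be carried out explicitly.
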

In Proposition \ref{PropTopDeg2OnePart}, we are interested 
in the coefficient of a single $R_j$ of maximal degree.
As mentioned above,
it is equivalent to look at the coefficient of a single $B_j$ of maximal degree.
In this paragraph, we try to understand this coefficient using 
Theorem~\ref{ThmGenFrob}.

Let us first see what happens in the case $r=2$:
we consider the coefficient of $B_{\mu_1+\mu_2}$ in $\Ch_{\mu_1,\mu_2}$.
The right-hand side of \eqref{EqGenFrob} can then be written as
\begin{multline}\label{EqGenFrob2}
    [z_1^{-1}] H(z_1) \cdots H(z_1 -\mu_1+1) \\
    [z_2^{-1}] H(z_2) \cdots H(z_2 -\mu_2+1)
    \frac{(z_1-z_2)(z_1-z_2+\mu_2-\mu_1)}{(z_1-z_2-\mu_1)(z_1-z_2+\mu_1)}
\end{multline}
When we expand the fraction in decreasing powers of $z_2$, no positive powers appear.
In a factor $H$, the maximal exponent of $z_2$ is $1$.
Hence, the term $B_h z_2^{-(h-1)}$ for $h \geq \mu_2+2$ will not contribute to the
coefficient in $z_2^{-1}$.
In particular, one can not obtain $B_{\mu_1+\mu_2}$, which is what we are looking for.
Therefore each term $H(z_2-c)$ can be replaced by $z_2-c$.

That being said, to obtain at the end the $B_j$ of maximal index,
we have to keep the biggest possible power of $z_1$ in the coefficient of $z_2^{-1}$.
To do that, we notice, that if we consider the total degree in the $z$-variable set
\begin{align*}
        z_2 - c &= z_2 + \text{smaller degree terms};\\
            \frac{(z_1-z_2)(z_1-z_2+\mu_t-\mu_s)}{(z_1-z_2-\mu_s)(z_1-z_2+\mu_t)} &=
                1 + \frac{\mu_2 \mu_1/z_2^2}{(1-z_1/z_2)^2} + \text{smaller degree terms}.
\end{align*}
Hence we have
\begin{align*}
    [z_2^{-1}] H(z_2) &\cdots  H(z_2 -\mu_2+1)
        \frac{(z_1-z_2)(z_1-z_2+\mu_2-\mu_1)}{(z_1-z_2-\mu_1)(z_1-z_2+\mu_1)} \\
         &= [z_2^{-1}] \left( z_2^{\mu_2} \cdot \frac{\mu_2 \mu_1/z_2^2}{(1-z_1/z_2)^2} \right)
        + \text{smaller degree terms in }z_1 \\
        &= \mu_1 \mu_2^2 z_1^{\mu_2-1} + o(z_1^{\mu_2-1}).
\end{align*}
Plugging this into equation~\eqref{EqGenFrob2} and setting all $B_j$ to $0$,
except $B_{\mu_1+\mu_2}$, we obtain
\begin{multline*}
    [B_{\mu_1+\mu_2}] \mu_1 \mu_2 \Ch_{\mu_1,\mu_2}
    = [B_{\mu_1+\mu_2}] [z_1^{-1}] \\
\times\prod_{i=0}^{\mu_1-1} \left( z_1 - i - B_{\mu_1+\mu_2} (z_1-i)^{-(\mu_1+\mu_2-1)} \right)
\big( \mu_1 \mu_2^2 z_1^{\mu_2-1} + o(z_1^{\mu_2-1}) \big) .
\end{multline*}
When we expand the product on the right-hand side,
the term containing $B_{\mu_1+\mu_2}$ of maximal degree in $z_1$ is obtained
by picking $\mu_1-1$ factors $z_1$, one factor $-B_{\mu_1+\mu_2} z_1^{-(\mu_1+\mu_2-1)}$
and finally the factor $\mu_1 \mu_2^2 z_1^{\mu_2-1}$ in the last parenthesis.
We have $\mu_1$ ways to do so (corresponding to the choice of the index $i$
from which we take the term $B_{\mu_1+\mu_2} z_1^{\mu_1+\mu_2-1}$) and thus
\begin{multline*}
    [B_{\mu_1+\mu_2}] \mu_1 \mu_2 \Ch_{\mu_1,\mu_2} \\
    = [z_1^{-1}] \bigg( - \mu_1 z_1^{\mu_1-1} z_1^{\mu_1+\mu_2-1}
(\mu_1 \mu_2^2 z_1^{\mu_2-1}) 
 + \text{smaller degree terms in }z_1 \bigg)
= - \mu_1^2 \mu_2^2
\end{multline*}
Since $[B_{\mu_1+\mu_2}] \Ch_{\mu_1,\mu_2}=[R_{\mu_1+\mu_2}] \Ch_{\mu_1,\mu_2}$,
we recover Proposition~\ref{PropTopDeg2OnePart}
in the case $\ell(\mu)=2$.

Let us consider now the general case.
We want to compute the coefficient of $B_j$ in $\Ch_{\mu_1,\dots,\mu_r}$
for $j -2=\sum_i (\mu_i -1)=K-r$.
As in the case $\ell(\mu)=2$,
when we extract the coefficient of some $z_t$ (for $t >1$),
we have to keep only the highest degree term in the $z$-variable set.
Therefore, for a fixed index $t>1$, we can replace $H(z_t-c)$ by $z_t$
and use the approximation
\begin{multline}\label{EqApproxFraction}
    \prod_{1\leq s < t} \frac{(z_s-z_t)(z_s-z_t+\mu_t-\mu_s)}{(z_s-z_t-\mu_s)(z_s-z_t+\mu_t)} \\
= 1 + \sum_{1\leq s < t} \frac{\mu_t \mu_s/z_t^2}{(1-z_s/z_t)^2} + \text{smaller degree terms}.
\end{multline}
So the highest degree term in $z_1$ after successive extractions
of the coefficients of
$z_r^{-1}, z_{r-1}^{-1} \cdots z_2^{-1}$ is
\[ [z_2^{-1}] \cdots [z_r^{-1}]
\left( \prod_{t=2}^r z_t^{\mu_t} \left[1 + \sum_{1\leq s < t}
\frac{\mu_t \mu_s/z_t^2}{(1-z_s/z_t)^2}) \right] \right)\]
Exchanging the product and summation symbol,
we get a sum over the following set:
for each $t>1$, we have to choose
an integer $s<t$ (we can not choose the summand $1$ in the bracket,
because we would get $z_t$ with a positive power, 
while we want to extract the coefficient of $z_t^{-1}$).
These choices can be represented as an unordered increasing
tree $T$ with $r$ vertices, in which $s$ in the father of $t$.
In the case $r=2$, we only had one summand.

If $f(t)$ denotes the father of $t$ in a tree $T$,
the summand associated to $T$ is
\begin{equation}
    \label{EqTech1}
    A_T := [z_2^{-1}] \cdots [z_r^{-1}] \left( \prod_{t=2}^r z_t^{\mu_t} 
\frac{\mu_t \mu_{f(t)}/z_t^2}{(1-z_{f(t)}/z_t)^2} \right).
\end{equation}
We then use the expansion
\[ \frac{1}{(1-z_{f(t)}/z_t)^2} = \sum_{m_t \geq 1}
m_t (z_{f(t)}/z_t)^{m_t-1}\]
and rewrite equation~\eqref{EqTech1} as
\begin{equation}
    \label{EqTech2}
    A_T = [z_2^{-1}] \cdots [z_r^{-1}] \left( \prod_{t=2}^r z_t^{\mu_t} 
    \mu_t \mu_{f(t)} z_t^{-2} \sum_{m_t \geq 1} m_t (z_{f(t)}/z_t)^{m_t-1} \right).
\end{equation}
A straightforward induction beginning at the leaves of $T$ and going up to
the root shows that the coefficients of $z_2^{-1} \cdots z_r^{-1}$ 
corresponds to the summand
\[ m_t= \sum_{u \in \h_T(t)} \mu_u - h_T(t) + 1, \]
where $h_T(t)=|\h_T(t)|$,
and $\h_T(t)$ is the hook of $t$, as defined in the introduction.
So, finally equation~\eqref{EqTech2} reduces to
\[ A_T = z_1^{K-\mu_1+r-1} \prod_{t=2}^r \mu_t \mu_{f(t)} 
\left( \sum_{u \in \h_T(t)} \mu_u - h_T(t) + 1 \right). \]
Coming back to formula~\eqref{EqGenFrob},
the coefficient $[B_j]\Ch_{\mu_1,\dots,\mu_r}$ is given by
\begin{multline*}
    [B_{K-r+2}] (-1)^r \mu_1 \cdots \mu_r \Ch_{\mu_1,\dots,\mu_r}\\
    = [B_{K-r+2}][z_1^{-1}] H(z_1) \cdots H(z_1-\mu_1+1) 
    \left(\sum_T A_T\right).
\end{multline*}
As in the case $r=2$, the extraction of the coefficient of $B_{K-r+2} z_1^{-1}$
yields an extra factor $\mu_1$ and the equation above simplifies to
\[ (-1)^{r-1} [B_j] \Ch_{\mu_1,\dots,\mu_r} = \sum_T \bigg( \prod_{t=2}^r \mu_{f(t)}
\bigg( \sum_{u \in \h_T(t)} \mu_u - h_T(t) + 1 \bigg) \bigg) . \]
Together with Proposition~\ref{PropTopDeg2OnePart} and the remark above that
\[ [B_j] \Ch_{\mu_1,\dots,\mu_r} = [R_j] \Ch_{\mu_1,\dots,\mu_r}
= [R_j] K_{\mu_1,\dots,\mu_r}, \]
this proves (in a very indirect way) Theorem \ref{ThmHookFormula}.

\section{Elementary operators on polynomials}
\label{SectProof1}

The purpose of this section is to give the first of our two direct proofs of the hook formula (Theorem \ref{ThmHookFormula}),
which uses operators on polynomials. We proceed by induction on $r$, with base case $r=1$, for which the theorem is trivially true.

In the induction,
we will consider trees whose label sets are not necessarily an interval
$[r]=\{ 1,\dots ,r\}$.
Thus we use the notation $X(T)$ for the label set of a tree $T$.
We shall use the following construction on trees.
\begin{definition}
    Let $T_1$ and $T_2$ be two unordered increasing trees with disjoint
    sets of labels.
    Assume that the label of the root of $T_1$ is smaller than the
    label of the root of $T_2$.
    Then, we can construct a new unordered increasing tree,
    called {\em grafting of $T_2$ on $T_1$}, denoted $T_2 \bullet T_1$,
    defined as follows:
    \begin{itemize}
        \item its set of labels is $X(T_1) \sqcup X(T_2)$ ;
        \item its root label is the root label of $T_1$ ;
        \item the vertex with the root label of $T_2$ is a son of the root ;
        \item every non-root vertex of $T_1$ (resp. $T_2$)
            has the same father in $T_2 \bullet T_1$ as in $T_1$ (resp. $T_2$).
    \end{itemize}
\end{definition}
This construction is illustrated on Figure \ref{FigGrafting}.

Now consider an arbitrary (unordered increasing) tree $T$ of size $r>1$.
The vertices labelled $1$ and $2$ must be joined by an edge because $T$ is increasing, 
so $T$ can be obtained in a unique way
by grafting a tree $T_2$ with root $2$ on a tree $T_1$ with root $1$.

\begin{figure}[tb]
    \begin{center}
          \includegraphics[height=3cm]{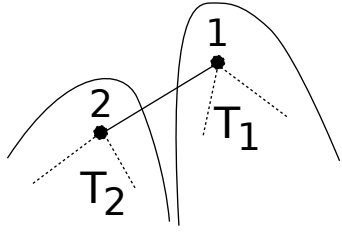}
     \end{center}
     \caption{A tree $T$ as a grafting of $T_2$ on $T_1$.}
     \label{FigGrafting}
\end{figure}

Let us denote, for a subset $X$ of $[r]$, $K_X = \sum_{i \in X} k_i$.
The weight of the tree $T_2 \bullet T_1$ obtained by grafting is given by the formula
\[\wt(T_2 \bullet T_1) = \wt(T_2) \wt(T_1) k_1 \left( K_{X(T_2)} -|X(T_2)| +1 \right), \]
so summing over all trees $T=T_2 \bullet T_1$, we obtain
\[ \sum_{\substack{T \text{ tree}, \\ X(T)=[r]}} \wt(T) =
\sum_{T_1,T_2} \wt(T_2) \wt(T_1) k_1 \left( K_{X(T_2)} -|X(T_2)| +1 \right). \]
The sum on the right-hand side runs over pairs of trees such that $X(T_1)$ contains $1$,
$X(T_2)$ contains $2$ and the sets $X(T_1)$ and $X(T_2)$ form a partition of $[r]$.
Splitting the sum according to the sets $X_h = X(T_h) \backslash \{h\}$ (for $h=1,2$), we obtain
\begin{multline}\label{EqHookInd} \!\!\!\!\!\! \sum_{\substack{T \text{ tree},\\ X(T)=[r]}} \!\!\!\!\!\! \wt(T)
= \!\!\!\!\!\! \sum_{\substack{X_1, X_2, \\ X_1 \sqcup X_2 = \{3,\dots,r\} }}
\!\!\!\!\!\!\!\!\!\!\!\! k_1 \left( k_2 + K_{X_2} -|X_2| \right) \\
\times\bigg( \!\!\!\!\!\! \sum_{\substack{T_1, \\ X(T_1)=\{1\} \sqcup X_1}} \!\!\!\!\!\! \wt(T_1) \bigg)
\bigg( \!\!\!\!\!\! \sum_{\substack{T_2, \\ X(T_2)=\{2\} \sqcup X_2}} \!\!\!\!\!\! \wt(T_2) \bigg).
\end{multline}
We now apply the induction hypothesis on the right-hand side to get, for $h=1,2$,
\[ \!\!\!\!\!\! \sum_{\substack{T_h, \\ X(T_h)=\{h\} \sqcup X_h}} \wt(T_h)
= k_h \bigg(\prod_{i \in X_h} k_i \bigg)  (k_h + K_{X_h} - 1)_{|X_h| - 1}. \]
Plugging this into~\eqref{EqHookInd}, we obtain
\[  \!\!\!\!\!\! \sum_{\substack{T \text{ tree}, \\ X(T) =[r]}} \wt(T)
= \bigg( \prod_{i=1}^r k_i\bigg) P(k_1,\dots,k_r) , \]
where
 \begin{equation}
  P(k_1,\dots,k_r):=\!\!\!\!\!\!\!\!\!\! \sum_{\substack{X_1, X_2, \\ X_1 \sqcup X_2 = \{3,\dots,r\} }} \!\!\!\!\!\!\!\!\! 
    k_1 (k_1 + K_{X_1} - 1)_{ |X_1| - 1}  (k_2 + K_{X_2} - 1)_{ |X_2| }.
    \label{EqDefP}
\end{equation}

In order to complete the inductive proof of our hook formula, we now prove that, for $r\geq 2$, $P(k_1,\dots,k_r)$ is equal to
\[Q(k_1,\dots,k_r) = (K-1)_{r-2}.\]
It is clear that both $\{ P(k_1,\dots,k_r)\}_{r \geq 2}$ and $\{ Q(k_1,\dots,k_r)\}_{r \geq 2}$ are families of multivariate polynomials, and
that, for each $r\geq 2$, $Q$ satisfies the following two properties:
\begin{itemize}
    \item As a polynomial in $k_1$, the constant term is
        \begin{equation}\label{EqCTP}
            Q(0,k_2,\dots,k_3) = (K_{\{2,\dots,r\}} -1)_{r-2} ;
        \end{equation}
    \item It satisfies the finite difference equation
        \begin{equation}\label{EqFiniteDiffP}
            \Delta_{k_1} Q(k_1,\dots,k_r) = \sum_{i = 3}^r
        Q(k_1+k_i,k_2,\dots,\widehat{k_i},\dots,k_r).
    \end{equation}
        Here $\Delta_{k_1}$ stands for the finite difference operator with respect
        to $k_1$, that is, $\Delta_{k_1} f(k_1)= f(k_1+1)-f(k_1)$, and the notation
        $\widehat{k_i}$ means that $k_i$ does {\em not} appear as an argument.
        \end{itemize}
These two properties completely determine the family of multivariate polynomials
$\{ Q(k_1,\dots,k_r)\}_{r \geq 2}$ (by immediate induction on $r$).
We now complete the proof that $P=Q$ by proving that the family $\{ P(k_1,\dots,k_r)\}_{r \geq 2}$
also has these two properties.

{\em Constant term}: If $X_1 \neq \emptyset$, then $(k_1 + K_{X_1} - 1)_{|X_1| - 1}$ is
 a polynomial in $k_1$, which implies that the summand corresponding to $X_1$
in Equation~\eqref{EqDefP} is a multiple of $k_1$.
Thus, the constant term of $P$ corresponds to the summand indexed by $X_1 = \emptyset$, which implies immediately that $P$ satisfies equation~\eqref{EqCTP}.

{\em Finite difference equation}: A simple computation gives
\begin{multline*}
    \Delta_{k_1} \big( k_1  (k_1 + K_{X_1} - 1)_{|X_1| - 1} \big)
    = (|X_1| k_1 + K_{X_1}) ( k_1 + K_{X_1} - 1)_{ |X_1| - 2} 
\end{multline*}
Therefore, from~\eqref{EqDefP} we obtain
\begin{multline} \label{EqDeltaP}
\Delta_{k_1} P(k_1,\dots,k_r) \\
= \!\!\!\!\!\!\!\! \sum_{\substack{X_1, X_2, \\ X_1 \sqcup X_2 = \{3,\dots,r\} }} \!\!\!\!\!\!\!\!
(|X_1| k_1 + K_{X_1})
 (k_1 + K_{X_1} - 1)_{ |X_1| - 2}
 (k_2 + K_{X_2} - 1)_{ |X_2| }.
\end{multline}

Also, directly from~\eqref{EqDefP}, we have
\begin{align*}
& \sum_{i=3}^r P(k_1+k_i,k_2,\dots,\widehat{k_i},\dots,k_r)\\
&= \sum_{i=3}^r\sum_{\substack{ Y_1, Y_2, \\ Y_1 \sqcup Y_2 = \{3,\dots,r\} \setminus \{i\} }} \!\!\!\!\!\!\!\!\!\!\!\!
(k_1 + k_i) 
 ( k_1 + k_i + K_{Y_1} - 1)_{ |Y_1| - 1}
 (k_2 + K_{Y_2} - 1)_{|Y_2|} \\
 &= \sum_{i=3}^r \sum_{\substack{X_1, X_2, \\ X_1 \sqcup X_2 = \{3,\dots,r\},\ i \in X_1}} \!\!\!\!\!\!\!\!\!\!\!\!
    (k_1 + k_i) 
    (k_1 + K_{X_1} - 1)_{|X_1| - 2}
    (k_2 + K_{X_2} - 1)_{|X_2|} \\
 &= \!\!\!\!\!\!\!\!\!\!\!\! \sum_{\substack{X_1, X_2, \\ X_1 \sqcup X_2 = \{3,\dots,r\}}} \!\!\!\!
  \bigg( \sum_{i \in X_1} (k_1 + k_i) \bigg) 
  (k_1 + K_{X_1} - 1)_{ |X_1| - 2}
  (k_2 + K_{X_2} - 1)_{|X_2|},
   \end{align*}
 where we have changed summation indices from the first equation above to the second by setting $X_1 = Y_1 \sqcup \{i\}$ and $X_2=Y_2$. Comparing this with~\eqref{EqDeltaP}
 implies immediately that $P$ satisfies equation~\eqref{EqFiniteDiffP}, which completes the proof that $P=Q$, and hence the first direct proof of our hook formula.
 
\section{Multivariate Lagrange inversion}
\label{SectProof2}

For the second direct proof of our hook formula (Theorem~\ref{ThmHookFormula}), we apply Lagrange inversion in many variables.
We again proceed by induction on $r$, with base case $r=1$, for which the theorem is trivially true.
Now consider an arbitrary (unordered increasing) tree $T$  of size $r > 1$.
The root vertex labelled $1$ has degree $j$ for some $j\geq 1$,
and the tree decomposes into $j$ sub-trees, whose vertex sets form a partition of $\{ 2,\ldots ,r\}$.
From this analysis we immediately obtain the following recurrence relationship
for the combinatorial sum on the left-hand side of the hook formula in Theorem~\ref{ThmHookFormula}:

\begin{equation}\label{CombTreeRec}
\sum_T \wt (T)=\sum_{j\geq 1}\frac{k_1^j}{j!} \sum_{\substack{ X_1 \sqcup\cdots\sqcup X_j \\ =\{ 2,\ldots ,r\} }}
\prod_{i=1}^j \left( K_{X_i}-\vert X_i\vert +1\right) \!\!\!\! \sum_{T_i: X(T_i)=X_i} \!\!\!\!\!\!\!\wt (T_i).
\end{equation}

We complete the proof by showing that the algebraic expression on the right-hand side of the hook formula in Theorem~\ref{ThmHookFormula} also satisfies this recurrence equation. To do so, we apply the following multivariate form of Lagrange's Implicit Function Theorem, as given in Goulden and Jackson~\cite{GJbook}, Theorem~1.2.9(1). 
\begin{theorem}\label{MVL}
 Suppose that $w_i=t_i\phi_i(\bfw )$, where $\phi_i$ is a formal power series with constant term $1$, for $i=1,\ldots ,r$, with $\bfw =(w_1,\ldots ,w_r)$. Then for integers $n_1,\ldots ,n_r$ and formal Laurent series $f$, we have

\begin{align*}
&[t_1^{n_1}\cdots t_r^{n_r}]f(\bfw ) \\
&=[\lambda_1^{n_1}\cdots \lambda_r^{n_r}]f(\bfla ) \phi_1(\bfla )^{n_1}\cdots \phi_r(\bfla )^{n_r}
\det \left(\delta_{ij}-\frac{\lambda_j}{\phi_i(\bfla )}\frac{\partial\phi_i(\bfla )}{\partial \lambda_j}\right)_{1\leq i,j\leq r},
\end{align*}
where $\bfla =(\lambda_1,\ldots ,\lambda_r)$.
\end{theorem}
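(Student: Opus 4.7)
The plan is to derive Theorem~\ref{MVL} from a change-of-variables identity for multivariate formal residues, applied to the substitution inverse to the map $(t_1,\ldots,t_r) \mapsto \bfw$. Since each $\phi_i$ has constant term $1$, the system $w_i = t_i \phi_i(\bfw)$ admits a unique solution $w_i = w_i(t_1,\ldots,t_r)$ in formal power series with zero constant term; conversely, the inverse map sends $\bfla = \bfw$ to $t_i = \lambda_i/\phi_i(\bfla)$. Rewriting coefficient extraction as a formal residue,
\[
[t_1^{n_1} \cdots t_r^{n_r}] f(\bfw) = [t_1^{-1} \cdots t_r^{-1}] \frac{f(\bfw)}{t_1^{n_1+1} \cdots t_r^{n_r+1}},
\]
I would then apply the formal change-of-variables rule
\[
[t_1^{-1} \cdots t_r^{-1}] g = [\lambda_1^{-1} \cdots \lambda_r^{-1}] g \cdot J(\bfla), \qquad J(\bfla) = \det\biggl(\frac{\partial t_i}{\partial \lambda_j}\biggr)_{1 \leq i,j \leq r},
\]
after expressing $g$ in $\lambda$-variables via $t_i = \lambda_i/\phi_i(\bfla)$. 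Substituting into the residuand yields $f(\bfla)\prod_i \phi_i(\bfla)^{n_i+1} \prod_i \lambda_i^{-n_i-1} J(\bfla)$, and converting back from the residue to the coefficient of $\lambda_1^{n_1}\cdots \lambda_r^{n_r}$ produces the desired form up to the Jacobian factor.

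Next I would compute $J(\bfla)$ explicitly. A direct application of the quotient rule gives
\[
\frac{\partial t_i}{\partial \lambda_j} = \frac{1}{\phi_i(\bfla)} \biggl(\delta_{ij} - \frac{\lambda_i}{\phi_i(\bfla)} \frac{\partial \phi_i(\bfla)}{\partial \lambda_j}\biggr),
\]
so pulling $\phi_i(\bfla)^{-1}$ out of the $i$-th row leaves
\[
J(\bfla) = \frac{1}{\phi_1(\bfla)\cdots \phi_r(\bfla)} \det(I - DA),
\]
where $D = \mathrm{diag}(\lambda_1,\ldots,\lambda_r)$ and $A_{ij} = \phi_i(\bfla)^{-1}\,\partial\phi_i(\bfla)/\partial \lambda_j$. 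The standard identity $\det(I - DA) = \det(I - AD)$, applied here in the Laurent series ring, then converts this to
\[
J(\bfla) = \frac{1}{\phi_1(\bfla)\cdots \phi_r(\bfla)} \det\biggl(\delta_{ij} - \frac{\lambda_j}{\phi_i(\bfla)} \frac{\partial \phi_i(\bfla)}{\partial \lambda_j}\biggr),
\]
and the prefactor $\prod_i \phi_i(\bfla)^{-1}$ absorbs one copy of $\phi_i$ from each $\phi_i(\bfla)^{n_i+1}$, producing exactly the right-hand side of the theorem.

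The main technical obstacle is the rigorous justification of the change-of-variables rule for formal multivariate residues. Analytically, one motivates it via Cauchy's theorem on a small polydisc in $\C^r$ where the $\phi_i$'s converge and the inverse map is holomorphic, but for a purely algebraic statement one must argue inside the formal Laurent series ring. The standard route, carried out in detail in Goulden--Jackson~\cite{GJbook}, is to verify the rule first on monomials $g = t_1^{m_1}\cdots t_r^{m_r}$ by an induction on $r$ that peels off one variable at a time and reduces to the univariate Lagrange inversion formula, then extend by linearity and continuity in the $(t_1,\ldots,t_r)$-adic topology on formal Laurent series. Once this lemma is secured, the bookkeeping above is routine, and the theorem follows.
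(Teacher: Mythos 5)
The paper does not actually prove Theorem~\ref{MVL}: it is quoted verbatim as Theorem~1.2.9(1) of Goulden and Jackson's book, so there is no internal proof to compare against. Your outline is the standard derivation of that result and is correct as far as it goes: the substitution $t_i=\lambda_i/\phi_i(\bfla)$ with $\bfla=\bfw$, the computation $\partial t_i/\partial\lambda_j=\phi_i(\bfla)^{-1}\bigl(\delta_{ij}-(\lambda_i/\phi_i(\bfla))\,\partial\phi_i(\bfla)/\partial\lambda_j\bigr)$, the use of $\det(I-DA)=\det(I-AD)$ to move the $\lambda$ from rows to columns, and the absorption of $\prod_i\phi_i(\bfla)^{-1}$ into $\prod_i\phi_i(\bfla)^{n_i+1}$ all check out and reproduce the stated right-hand side exactly. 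The one substantive caveat is that everything rests on the multivariate formal residue composition rule for \emph{Laurent} series under a substitution whose linear part is the identity; that lemma is the entire content of the theorem (the rest is bookkeeping), and you defer it to the same reference the paper cites rather than proving it. You correctly identify it as the obstacle and sketch the right strategy (verify on monomials $t_1^{m_1}\cdots t_r^{m_r}$, where the residue of the transformed form vanishes unless all $m_i=-1$, then extend by linearity), so this reads as an accurate outline of the proof in the cited source rather than a self-contained argument; to make it complete you would need to carry out that monomial verification, which in several variables requires the observation that $t^{m}\,dt_1\wedge\cdots\wedge dt_r$ is a formal exact form unless every $m_i=-1$.
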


Applying this form of Lagrange's Theorem, we obtain the following identity.
\begin{theorem}\label{MVLidentity}
For $r\geq 2$, we have
\[ k_1\cdots k_r (K-1)_{r-2}
=\sum_{j\geq 1}\frac{k_1^j}{j!} \sum_{ X_1\sqcup\cdots\sqcup X_j=\{ 2,\ldots ,r\} }
 \prod_{i=1}^j \bigg( \prod_{\ell\in X_i}k_{\ell} \bigg)  (K_{X_i}-1)_{\vert X_i\vert -1}.
\]
\end{theorem}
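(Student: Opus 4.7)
The strategy is to recognize both sides of Theorem~\ref{MVLidentity} as coefficients of a single formal power series, and to evaluate the latter by means of Theorem~\ref{MVL}. By the exponential formula for set partitions, the right-hand side equals
\[
[t_2 \cdots t_r]\exp\bigl(k_1\, W(t_2,\ldots,t_r)\bigr),
\]
where
\[
W(t_2,\ldots,t_r) := \sum_{\emptyset\neq X\subseteq\{2,\ldots,r\}} \Bigl(\prod_{\ell\in X} k_\ell t_\ell\Bigr)\,(K_X-1)_{|X|-1}
\]
is the ``single block'' generating function. I would then introduce the implicit formal power series $L = L(t_2,\ldots,t_r)$ defined by
\[
L = \sum_{l=2}^r k_l t_l\, (1+L)^{k_l},
\]
which fits the Lagrange ansatz $w_j = t_j \phi_j(\bfw)$ with $\phi_j(\bfw) = \bigl(1+\sum_{l=2}^r k_l w_l\bigr)^{k_j}$ and $\phi_j(0) = 1$, as required by Theorem~\ref{MVL}.

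Next, I would apply Theorem~\ref{MVL} to $f(\bfw) = (1+L(\bfw))^c$ for a generic parameter $c$, with $n_j = 1$ for every $j$. A direct computation gives $\prod_j \phi_j(\bfla) = (1+L(\bfla))^{K-k_1}$ and reduces the determinant factor to $1 - \sum_j k_j^2 \lambda_j/(1+L(\bfla))$. Expanding $(1+L)^A$ as a binomial series and using the multinomial extraction
\[
[\lambda_2\cdots\lambda_r]\Bigl(\sum_{l=2}^r k_l\lambda_l\Bigr)^{r-1} = (r-1)!\prod_{l=2}^r k_l
\]
(together with the analogous identity where one variable is omitted), the two resulting summands combine via $(A)_{r-1} = A(A-1)_{r-2}$ into the clean identity
\[
[t_2 \cdots t_r](1+L)^c = c\,(c+K-k_1-1)_{r-2}\prod_{l=2}^r k_l.
\]
Specializing to $c = k_1$ gives exactly $k_1 k_2 \cdots k_r(K-1)_{r-2}$, the left-hand side of Theorem~\ref{MVLidentity}.

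To close the argument I would verify that $\exp(k_1 W) = (1+L)^{k_1}$, equivalently $W = \log(1+L)$. Differentiating the displayed coefficient formula in $c$ at $c = 0$ gives
\[
[t_2\cdots t_r]\log(1+L) = (K-k_1-1)_{r-2}\prod_{l=2}^r k_l,
\]
which matches the coefficient of $t_2\cdots t_r$ in $W$; applying the same Lagrange argument with $t_l = 0$ for $l\notin X$ verifies agreement of all the other mixed coefficients, and hence $W = \log(1+L)$ as a formal power series. The main obstacle here is finding the correct Lagrange ansatz: the exponent $k_j$ in $\phi_j$ is precisely what converts the binomial coefficients $\binom{A}{|X|-1}$ coming from $(1+L)^A$ into the falling factorials $(K_X-1)_{|X|-1}$, while the weights $k_l$ inside $L$ produce the multiplicative prefactor $\prod_{l\in X} k_l$ through the multinomial extraction.
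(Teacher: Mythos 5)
Your proposal is correct and follows essentially the same route as the paper: both recognize the right-hand side via the exponential formula as $[t_2\cdots t_r]$ of $\exp\bigl(k_1\log(\text{tree series})\bigr)$ for a series defined by a Lagrange ansatz with $\phi_i=(1+\cdots)^{k_i}$, and evaluate everything by Theorem~\ref{MVL}; your variations (folding the weights $k_\ell$ into the functional equation rather than multiplying by $k_2\cdots k_r$ at the end, and using a generic exponent $c$ with differentiation at $c=0$ where the paper extracts $[t_1]w_1$ and does a separate Lagrange computation for the logarithm) are cosmetic. One correction: the identity $W=\log(1+L)$ is false as an equality of formal power series, since $W$ is multilinear in $t_2,\dots,t_r$ while $\log(1+L)$ contains non-squarefree monomials (e.g.\ its $t_2^2$-coefficient is $k_2^3-k_2^2/2\neq 0$); however, only the squarefree coefficients of $\log(1+L)$ can contribute to $[t_2\cdots t_r](1+L)^{k_1}$, and these are exactly the coefficients you verify, so the conclusion $[t_2\cdots t_r]\exp(k_1W)=[t_2\cdots t_r](1+L)^{k_1}$ — which is all the proof needs — stands.
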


\begin{proof}
Consider $\phi_i(\bfw )=(1+w_1+\cdots +w_r)^{k_i}$, for $i=1, \ldots ,r$. Then we have
\begin{align*}
\det \left(\delta_{ij}-\frac{\lambda_j}{\phi_i(\bfla )}\frac{\partial\phi_i(\bfla )}{\partial \lambda_j}\right)
&= \det \left(\delta_{ij}-\frac{\lambda_j k_i}{1+\lambda_1+\cdots +\lambda_r} \right)\\
&=1-\frac{\sum_{i=1}^r \lambda_i k_i}{1+\sum_{i=1}^r \lambda_i},
\end{align*}
since $\det (I+M)= 1+\trace\ M$ when $\rank\ M \leq 1$.

We now calculate $[t_1\cdots t_r] w_1$ in two ways. First, directly from Theorem~\ref{MVL}, with $n_1=\cdots =n_r=1$, and $f(\bfw )=w_1$, we obtain
\begin{align*}
[t_1\cdots t_r]w_1&=[\lambda_1\cdots\lambda_r]\lambda_1 (1+\sum_{i=1}^r \lambda_i)^K
\left( 1-\frac{\sum_{i=1}^r \lambda_i k_i}{1+\sum_{i=1}^r \lambda_i}\right)\\
&= (r-1)!\binom{K}{r-1} -(K-k_1)(r-2)!\binom{K-1}{r-2}\\
&= k_1 (K-1)_{r-2}.
\end{align*}

Second, applying the functional equation $w_1=t_1\phi_1(\bfw )$, we obtain
\begin{align*}
[t_1\cdots t_r]w_1&= [t_1\cdots t_r]t_1(1+\sum_{i=1}^r w_i)^{k_1}\\
&= [t_2\cdots t_r]\sum_{j\geq 0}\frac{k_1^j}{j!}\left(\log (1+\sum_{i=1}^r w_i)\right)^j\\
&=\sum_{j\geq 1} \frac{k_1^j}{j!} \sum_{X_1\sqcup \cdots\sqcup X_j = \{ 2, \ldots , r \} }
\prod_{i=1}^j \left(  [\prod_{x\in X_i} t_x]\log (1+\sum_{i=1}^r w_i ) \right).
\end{align*}

But, for any $X\subseteq\{ 2,\ldots ,r\}$, with $\vert X\vert = m\geq 1$, Theorem~\ref{MVL} gives
\begin{align*}
&[\prod_{x\in X}t_x] \log (1+\sum_{i=1}^r w_i ) \\
&= [\prod_{x\in X}\lambda_x] \log (1+\sum_{i=1}^r \lambda_i )
(1+\sum_{i=1}^r \lambda_i )^{K_X}\left( 1-\frac{\sum_{i=1}^r \lambda_i k_i}{1+\sum_{i=1}^r \lambda_i}\right)\\
&= [\prod_{x\in X}\lambda_x] \log (1+\sum_{x\in X} \lambda_x )
(1+\sum_{x\in X}\lambda_x )^{K_X}\left( 1-\frac{\sum_{x\in X} \lambda_x k_x}{1+\sum_{x\in X} \lambda_x}\right)\\
&= m![z^m]\log(1+z) (1+z)^{K_X} - K_X (m-1)! [z^{m-1}]\log (1+z) (1+z)^{K_X -1}\\
&=(m-1)![z^{m-1}] \left\{ \frac{d}{dz} \left( \log (1+z) (1+z)^{K_X}  \right)   - \log (1+z) \frac{d}{dz} (1+z)^{K_X} \right\}\\
&= (m-1)![z^{m-1}]\frac{1}{1+z} (1+z)^{K_X}
= (m-1)!\binom{K_X -1}{m-1} = (K_X -1)_{m-1}.
\end{align*}
The result follows by equating the two expressions for $[t_1\cdots t_r] w_1$, and then multiplying by $k_2\cdots k_r$.
\end{proof}

It follows immediately from Theorem~\ref{MVLidentity} that the algebraic expression on the right-hand side of the hook formula in Theorem~\ref{ThmHookFormula} also satisfies recurrence equation~\eqref{CombTreeRec}, and this completes the second direct proof of our hook formula.

\section*{Acknowledgements}
We thank an anonymous referee for pointing out some references.

\bibliographystyle{abbrv}

\bibliography{1205}
\end{document}